\numberwithin{equation}{section}
\newtheorem{proposition}{Proposition}[section]
\newtheorem{lemma}[proposition]{Lemma}
\newtheorem{corollary}[proposition]{Corollary}
\newtheorem{theorem}[proposition]{Theorem}
\theoremstyle{definition}
\newtheorem{remark}[proposition]{Remark}
\begin{document}
\title[Quenched Central Limit Theorem in a Corner Growth Setting]{Quenched Central Limit Theorem\\in a Corner Growth Setting}

\author[C. Gromoll]{H. Christian Gromoll}\thanks{C. Gromoll,
	University of Virginia, Department of Mathematics, 141 Cabell Drive, Kerchof
	Hall, P.O. Box 400137, Charlottesville, VA 22904,
	USA, E-mail: \texttt{gromoll@virginia.edu}}

\author[M. Meckes]{Mark W. Meckes}\thanks{M. Meckes, Department of Mathematics, Applied
	Mathematics, and Statistics, Case Western Reserve University, 10900 Euclid
	Ave., Cleveland, OH 44106, USA, E-mail: \texttt{mark.meckes@case.edu}}

\author[L. Petrov]{Leonid Petrov}\thanks{L. Petrov, University of Virginia,
	Department of Mathematics, 141 Cabell Drive, Kerchof Hall, P.O. Box 400137,
	Charlottesville, VA 22904, USA, and Institute for Information Transmission
	Problems, Bolshoy Karetny per. 19, Moscow, 127994,
	Russia, E-mail: \texttt{petrov@virginia.edu}}

\date{}

\begin{abstract}
  We consider point-to-point directed paths in a random environment on the
  two-dimensional integer lattice. For a general independent environment under
  mild assumptions we show that the quenched energy of a typical path
  satisfies a central limit theorem as the mesh of the lattice goes to zero.
  Our proofs rely on concentration of measure techniques and some
  combinatorial bounds on families of paths.  \end{abstract}

\maketitle

\section{Introduction and main results}
\label{sec:intro}

A number of well-known probabilistic models derive their underlying complexity
from a variant of the following simple setup. Put independent and identically
distributed weights at each vertex of the two-dimensional integer lattice.
Given a lattice point in the first quadrant, consider all paths in the lattice
from the origin to this point that only move up or to the right at each step.
Each such path has a random energy given by the sum of the weights along the
path, and so the collection of random energies indexed by the up-right paths
exhibits a complicated dependence structure. This dependence is at the heart
of the difficulty in understanding such models as the totally asymmetric
simple exclusion process (TASEP), the infinite tandem queue, the random
directed polymer, or the corner growth model. 

For example in the corner growth model, or directed nearest neighbor
last-passage percolation on the 2d lattice, the fundamental issue is to
understand the distribution of the maximum-energy path to a given point. This maximal energy
represents the last passage time to the point, or time at which it joins
the growing corner shape. In directed polymer models one assigns 
a Boltzmann weight to each path according to its random energy, and one is concerned with
the sum of all polymer weights, or partition function, which normalizes the
polymer weights into probabilities. (See e.g. \cite{CorwinKPZ} for a description
of the model as well as its relationship to equivalent models). Both are
difficult models because of the high degree of dependence in the joint
distribution of the collection of path energies. A more detailed discussion of
these models as they relate to our result appears further below. 

In this paper we derive a result about the joint distribution of the path
energies which to our knowledge has not been observed previously. Namely, we
show that conditional on the environment of weights, the empirical
distribution of the family of path energies is approximately Gaussian. More
precisely, for almost every environment, the energy of an up-right path
selected uniformly at random is asymptotically normally distributed as the
mesh gets small.

This is the content of our main theorem, which is proved for generally
distributed weights with nonzero variance and
under further moment assumptions (in fact, in our setting the 
weights need not even be identically distributed).  We now introduce some
notation, describe our results, and then discuss connections to the
corner growth and directed random polymer models.

\subsection{Up-right paths in a random environment}

We work inside the positive quadrant $\mathbb{Z}_{\ge1}^{2}$ of the
two-dimensional integer lattice. By convention, coordinates $(i,j)\in
	\mathbb{Z}_{\ge1}^{2}$ refer to squares, see \Cref{fig:coordinates_paths}. A
(fixed) \emph{environment} $w$ is an assignment of a real number $w_{ij}\in
	\mathbb{R}$ to every square of $\mathbb{Z}_{\ge1}^{2}$. We call the 
$w_{ij}$ \emph{weights}.

Fix integers $M,N\ge1$ and consider the rectangle $1\le i\le M$, $1\le j\le N$
inside the quadrant. Denote it by $\square_{M,N}$. The environment $w$
restricted to $\square_{M,N}$ is a vector in $\mathbb{R}^{MN}$.

An \emph{up-right path} $\sigma$ from $(1,1)$ to $(M,N)$ is a collection
$\left\{ (i_k,j_k)\colon k=1,\ldots,M+N-1  \right\}$ such that
$(i_{k+1}-i_k,j_{k+1}-j_k)$ is either $(1,0)$ (horizontal step) or $(0,1)$
(vertical step), $(i_1,j_1)=(1,1)$, and $(i_{M+N-1},j_{M+N-1})=(M,N)$. To each
up-right path $\sigma$ we associate a vector
\begin{equation*}
	Y^{\sigma}\in \mathbb{R}^{MN},\qquad
	Y^{\sigma}_{ij}:=\mathbf{1}_{(i,j)\in \sigma}, \qquad 1\le i\le M, \ 1\le j\le
	N.
\end{equation*}
Here and below $\mathbf{1}_{A}$ is the indicator of $A$.

For any up-right path $\sigma=\left\{ (i_k,j_k) \right\}$ define its
\emph{energy} with respect to an environment $w$ as
\begin{equation}
	\label{weight_of_path}
	\langle Y^{\sigma},w \rangle=\sum_{k=1}^{M+N-1}w_{i_kj_k},
\end{equation}
where $\langle \cdot,\cdot \rangle$ is the standard inner product in
$\mathbb{R}^{MN}$.

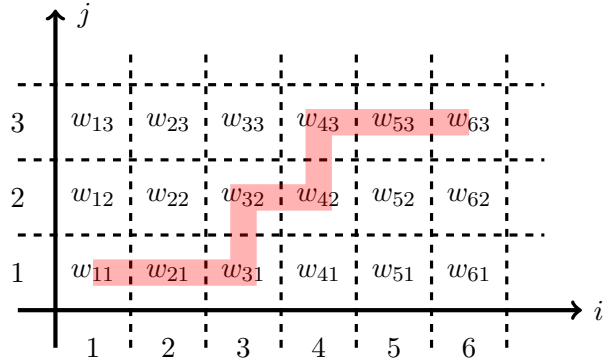
\begin{figure}[htpb]
	\begin{tikzpicture}[scale=1, very thick]
		\draw[->, ultra thick] (-.5,0)--++(7.5,0) node [right] {$i$};
		\draw[->, ultra thick] (0,-.5)--++(0,4.5) node [right,
			xshift=4, yshift=-2] {$j$};
		\foreach \xx in {1,...,6}
			{
				\draw[dashed] (\xx,-.5)--++(0,4);
				\node at (\xx-.5,-.5) {$\xx$};
			}
		\foreach \yy in {1,...,3}
			{
				\draw[dashed] (-.5,\yy)--++(7,0);
				\node at (-.5,\yy-.5) {$\yy$};
			}
		\foreach \xx in {1,...,6}
			{
				\foreach \yy in {1,...,3}
					{
						\node at (\xx-.5,\yy-.5)
						{$w_{\xx\yy}$};
					}
			}
		\draw[line width=10pt, red, opacity=.3]
		(.5,.5)--++(2,0)--++(0,1)--++(1,0)
		--++(0,1)--++(2,0);
	\end{tikzpicture}
	\caption{Environment and an up-right path from $(1,1)$ to
		$(M,N)=(6,3)$.}
	\label{fig:coordinates_paths}
\end{figure}

Now suppose both the environment and the path
are chosen at random, independently of each other. Assume the random
environment $w$ consists of independent random variables $w_{ij}$ defined on a
probability space $(\Omega_w,\mathcal{F}_w,\mathbb{P}_w)$ with
\begin{equation}
	\label{environment_moment_assumptions}
	\mathbb{E}_ww_{ij}=0,\qquad
	\mathbb{E}_ww_{ij}^2=1,\qquad
  \mathbb{E}_w|w_{ij}|^{\mathsf{p}}\le \mathsf{K},\qquad \text{for all $i,j$,}
\end{equation}
for some $\mathsf{p}$ (to be specified later) and $\mathsf{K}>0$. Here $\mathbb{E}_w$ is
expectation with respect to $\mathbb{P}_w$.

Assume that $\sigma$ is a random  up-right path chosen (according to some
distribution) from all paths inside a given subset
$\Sigma_{M,N}\subseteq\square_{M,N}$. A simple example is when
$\Sigma_{M,N}=\square_{M,N}$ and $\sigma$ is chosen
uniformly from all $\binom{M+N-2}{M-1}$ possible paths. In any case, $\sigma$ is a
random path defined on some
$(\Omega_\sigma,\mathcal{F}_\sigma,\mathbb{P}_\sigma)$, and expectation with
respect to $\mathbb{P}_{\sigma}$ will be denoted $\mathbb{E}_{\sigma}$.

We thus think of the energy of a path $\langle Y^{\sigma},w \rangle$ in
\eqref{weight_of_path} as a random variable defined on
$(\Omega_w\times\Omega_\sigma,\mathcal{F}_w\times\mathcal{F}_\sigma,\mathbb{P}_w\times\mathbb{P}_\sigma)$
which depends on both the randomness in the environment and in the path 
(the path is independent from the environment). The
goal of this paper is to show that for certain natural distributions of
$\sigma$ and $\mathbb{P}_w$-almost every environment, the (quenched) random variable
$\langle Y^{\sigma},w \rangle$ depending on the random path $\sigma$ is
asymptotically Gaussian as $M,N\to\infty$. A precise formulation
is given next.

\subsection{Quenched central limit theorems}
\label{sub:gaussian_concentration_intro}
Let the environment $w=\left\{ w_{ij} \right\}$ consist of independent random
variables satisfying \eqref{environment_moment_assumptions}.  Let $M=\lfloor
\xi N \rfloor $, where $\xi>0$ is fixed.

\begin{theorem}
	\label{thm:clt_all}
 If $\mathsf{p}>12$ and $\sigma$ is chosen uniformly from all up-right paths
 in the rectangle $\square_{M,N}$, then $\mathbb{P}_w$-almost surely,
	\begin{equation}
		\label{eq:clt_all}
		\frac{1}{\sqrt{M+N-1}}\sum_{(i,j)\in \sigma}w_{ij}
		\xrightarrow{\ \mathscr{D}\ }\mathscr{N}(0,1)
		,\qquad
		N\to\infty ,
	\end{equation}
where the convergence in distribution to the standard normal is with respect
to the marginal $\mathbb{P}_\sigma$. 
\end{theorem}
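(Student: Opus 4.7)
The plan is to establish convergence of the quenched characteristic function. Set $L=M+N-1$, $S^\sigma:=L^{-1/2}\langle Y^\sigma,w\rangle$, and $\phi_N(t\mid w):=\mathbb{E}_\sigma[e^{itS^\sigma}]$. It is enough to show that, for each fixed $t\in\mathbb{R}$, $\phi_N(t\mid w)\to e^{-t^2/2}$ for $\mathbb{P}_w$-a.e.\ environment $w$; then by restricting to a countable dense set of $t$'s and invoking L\'evy's continuity theorem, the quenched CLT follows.

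I would split the argument into (i) an annealed limit and (ii) concentration of $\phi_N(t\mid w)$ around its $w$-mean. For (i), conditional on a fixed path $\sigma$, the $L$ weights $\{w_{ij}:(i,j)\in\sigma\}$ are independent, centered, of unit variance, and have bounded $\mathsf{p}$-th moment; hence the standard Taylor expansion of the logarithm of the characteristic function gives
\[
\mathbb{E}_w[e^{itS^\sigma}\mid\sigma]
=\prod_{(i,j)\in\sigma}\mathbb{E}_w e^{itw_{ij}/\sqrt{L}}
=e^{-t^2/2}\bigl(1+o(1)\bigr),
\]
uniformly in $\sigma$, so averaging over $\sigma$ yields $\mathbb{E}_w\phi_N(t\mid w)\to e^{-t^2/2}$. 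For (ii) I would estimate the variance
\[
\mathrm{Var}_w\phi_N(t\mid w)
=\sum_{\sigma,\sigma'}\mathbb{P}_\sigma(\sigma)\mathbb{P}_\sigma(\sigma')\Bigl[\mathbb{E}_w e^{\frac{it}{\sqrt L}\langle Y^\sigma-Y^{\sigma'},w\rangle}-\mathbb{E}_w e^{itS^\sigma}\,\mathbb{E}_w e^{-itS^{\sigma'}}\Bigr];
\]
by independence of the $w_{ij}$, the bracketed quantity depends only on the overlap $\sigma\cap\sigma'$. Thus the required decay of $\mathrm{Var}_w\phi_N$ reduces to a combinatorial estimate on the distribution of $|\sigma\cap\sigma'|$ for two independent uniform up-right paths, precisely the kind of path-counting bound the abstract advertises.

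The main obstacle is this combinatorial/concentration step: obtaining a polynomial rate $\mathrm{Var}_w\phi_N(t\mid w)\le C(t)N^{-\alpha}$ with $\alpha>0$, so that Chebyshev combined with Borel--Cantelli along a polynomial subsequence $N_r$ upgrades $L^2$-convergence to a.s.\ convergence (with an interpolation argument for intermediate $N$ using the Lipschitz dependence of $\phi_N$ on $w$). The hypothesis $\mathsf{p}>12$ enters precisely here: it is needed to bound high-order moments of weighted sums of the $w_{ij}$ and to control the tails in the characteristic-function expansion, ensuring that the environment-side moment input matches the combinatorial overlap estimate strongly enough to close the Borel--Cantelli argument.
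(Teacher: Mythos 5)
Your outline is a genuinely different route from the paper's. You propose the characteristic-function/second-moment method: show that the annealed characteristic function converges, then control $\mathrm{Var}_w\,\phi_N(t\mid w)$ by reducing it to the distribution of the overlap $|\sigma\cap\sigma'|$ of two independent uniform paths, and finish with Chebyshev plus Borel--Cantelli along a subsequence. The paper instead works with convex $1$-Lipschitz test functions, truncates the environment weights at level $R$, applies Talagrand's concentration inequality to the convex Lipschitz functional $w\mapsto\int f\,d\mu_w^{(R)}$ (whose Lipschitz constant is controlled by $L(N)=\|\mathbb{E}_\sigma Y^\sigma\|_2$), and uses a Berry--Esseen-type $L^1$-bound for the annealed limit. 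The two combinatorial inputs are in fact the same object: $L(N)^2=\sum_a\mathbb{P}_\sigma(a\in\sigma)^2=\mathbb{E}_{\sigma,\sigma'}|\sigma\cap\sigma'|$, and the paper's \Cref{lemma:all_paths} (a hypergeometric/Stirling computation) gives $L(N)^2=O(\sqrt N)$, which is exactly what you would plug into your variance bound to get $\mathrm{Var}_w\phi_N\lesssim t^2 N^{-1/2}$. Your approach avoids truncation and Talagrand and is arguably more elementary; the paper's buys exponential (hence directly summable over all $N$) concentration from Talagrand, at the cost of the truncation step.

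There are, however, two concrete gaps in your sketch as written. First, variance decay $O(N^{-1/2})$ is not summable in $N$, so you need Borel--Cantelli only along a subsequence and then an interpolation argument between $N_r$ and $N_{r+1}$. You gesture at ``Lipschitz dependence of $\phi_N$ on $w$,'' but that does not help: to compare $\phi_N(t\mid w)$ with $\phi_{N'}(t\mid w)$ for the \emph{same} realization $w$ at nearby indices $N$ and $N'$, you must compare uniform averages over two substantially different ensembles of up-right paths on different rectangles (adding one row and one column multiplies the number of paths by a constant factor close to $4$). This is not the usual martingale/nested-filtration situation encountered in, say, RWRE along time, and it is precisely to avoid this step that the paper's argument is engineered to give summable bounds over \emph{all} $N$. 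Without that interpolation, your argument only yields the CLT along a subsequence. Second, your explanation of the hypothesis $\mathsf{p}>12$ does not fit your own method. In the characteristic-function computation only $\mathbb{E}_w|w_{ij}|^2<\infty$ (for the variance estimate) and $\mathbb{E}_w|w_{ij}|^3<\infty$ (for the annealed rate) appear, and nothing visibly forces $\mathsf{p}>12$. In the paper, $\mathsf{p}>12$ arises from balancing the truncation error (a Chebyshev term with denominator $R^{\mathsf{p}-2}$, introduced solely because Talagrand needs bounded inputs) against the Talagrand exponent $N/(L^2R^2)$; with $L(N)=O(N^{1/4})$ this cross-over is exactly $\mathsf{p}>6/(1-2\lambda)=12$. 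So the constant $12$ is an artifact of the truncation-plus-Talagrand strategy, not of bounding high moments of weighted sums, and you should not expect it to reappear in a characteristic-function proof. Either identify where such a moment condition really enters your argument (or acknowledge it may be unnecessary there), and close the interpolation gap, before the proposal is complete.
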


In other words, for large $N$ and almost every environment $w$, the empirical
distribution of the family of path energies will be approximately Gaussian.
The assumption $\mathsf{p}>12$ may seem unexpected. As will be seen in the
proofs, it arises from a combination of the moment assumptions needed for our
concentration inequality and our path counting estimates. 

\begin{corollary}
	\label{thm:clt_conditioned}
  The statement of \Cref{thm:clt_all} remains valid (still with
  $\mathsf{p}>12$) when $\sigma$ is chosen uniformly from all up-right paths
  passing through each of the points $(\lfloor \xi_i  N\rfloor , \lfloor
  \zeta_i N \rfloor )$, $i=1,\ldots, \ell$ (for finite~$\ell$), where
  $0<\xi_1<\ldots<\xi_\ell<\xi $ and $0<\zeta_1<\ldots<\zeta_\ell <1$ are
  fixed.  See \Cref{fig:clt_regimes},~(a).
\end{corollary}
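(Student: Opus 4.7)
The plan is to reduce \Cref{thm:clt_conditioned} to \Cref{thm:clt_all} via a decomposition of the constrained path into independent pieces. Set $P_0=(1,1)$, $P_{\ell+1}=(M,N)$, and $P_r=(\lfloor \xi_r N\rfloor,\lfloor\zeta_r N\rfloor)$ for $1\le r\le\ell$. Any up-right path $\sigma$ passing through all of $P_0,\ldots,P_{\ell+1}$ splits canonically into sub-paths $\sigma^{(0)},\ldots,\sigma^{(\ell)}$, with $\sigma^{(r)}$ running from $P_r$ to $P_{r+1}$ inside the axis-aligned sub-rectangle $R_r$ determined by those corners. The total number of such $\sigma$ factors as the product over $r$ of the number of up-right paths in each $R_r$, so under the uniform law $\mathbb{P}_\sigma$ the sub-paths $\sigma^{(0)},\ldots,\sigma^{(\ell)}$ are \emph{independent}, each uniformly distributed among up-right paths in its own sub-rectangle. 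Moreover, since the $R_r$ meet at most in single corner squares, the restrictions of $w$ to the different $R_r$ are independent under $\mathbb{P}_w$.

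Each sub-rectangle $R_r$ has dimensions $M_r\times N_r$ with $M_r,N_r\to\infty$ and fixed positive aspect ratio $M_r/N_r\to(\xi_{r+1}-\xi_r)/(\zeta_{r+1}-\zeta_r)\in(0,\infty)$, where one sets $\xi_0=\zeta_0=0$ and $\xi_{\ell+1}=\xi$, $\zeta_{\ell+1}=1$. \Cref{thm:clt_all} therefore applies inside each $R_r$ and gives, for $\mathbb{P}_w$-almost every environment,
\[
Z^{(r)}_N:=\frac{1}{\sqrt{M_r+N_r-1}}\sum_{(i,j)\in\sigma^{(r)}}w_{ij}\xrightarrow{\ \mathscr{D}\ }\mathscr{N}(0,1)
\]
as $N\to\infty$ under $\mathbb{P}_\sigma$. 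Since there are only $\ell+1$ sub-rectangles, intersecting the corresponding full-measure sets yields a single environment on which this convergence holds simultaneously for every $r$.

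To assemble the answer, decompose
\[
\sum_{(i,j)\in\sigma}w_{ij}=\sum_{r=0}^\ell\,\sum_{(i,j)\in\sigma^{(r)}}w_{ij}\;-\;\sum_{r=1}^\ell w_{P_r},
\]
the second term correcting for the double counting of weights at the shared waypoints. On the fixed good environment, the $Z^{(r)}_N$ are $\mathbb{P}_\sigma$-independent (they depend on disjoint, independent sub-paths), so their joint law converges to that of $\ell+1$ i.i.d.\ standard Gaussians. Setting $\alpha^{(r)}_N:=(M_r+N_r-1)/(M+N-1)$, one has $\alpha^{(r)}_N\to\alpha^{(r)}\ge0$ with $\sum_r\alpha^{(r)}=1$, and the continuous mapping theorem yields
\[
\frac{1}{\sqrt{M+N-1}}\sum_{r=0}^\ell\sum_{(i,j)\in\sigma^{(r)}}w_{ij}=\sum_{r=0}^\ell\sqrt{\alpha^{(r)}_N}\,Z^{(r)}_N\xrightarrow{\ \mathscr{D}\ }\mathscr{N}(0,1).
\]
The waypoint correction $\sum_{r=1}^\ell w_{P_r}/\sqrt{M+N-1}$ is $\mathbb{P}_w$-a.s.\ negligible: the moment bound $\mathsf{p}>12$ combined with Borel--Cantelli gives $|w_{(a_N,b_N)}|=o(\sqrt N)$ a.s.\ along any deterministic lattice sequence $(a_N,b_N)$, so the finite sum of $\ell$ waypoint weights, once divided by $\sqrt{M+N-1}$, vanishes.

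I do not anticipate any serious obstacle: the argument reduces \Cref{thm:clt_conditioned} to finitely many applications of \Cref{thm:clt_all}, one per sub-rectangle, together with the trivial observation that $O(1)$ boundary and double-counting corrections are absorbed by the $\sqrt N$ normalization. The one conceptual point worth emphasizing is the factorization of both $\mathbb{P}_w$ and $\mathbb{P}_\sigma$ across the sub-rectangles, which is what allows the quenched CLTs from the individual pieces to combine into a single quenched CLT for the constrained ensemble.
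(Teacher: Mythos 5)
Your argument is correct but takes a genuinely different route from the paper's. The paper stays inside the general machinery of \Cref{thm:general_QCLT} and reuses the same path-counting bound: the sum $\sum_k\mathscr{M}_k$ of maximal hitting probabilities splits, across the $\ell$ waypoints, into $\ell+1$ blocks, each bounded by $O(\sqrt N)$ exactly as in \Cref{lemma:all_paths}; the conclusion then follows from \Cref{thm:general_QCLT} with $\lambda=\tfrac14$. You instead exploit the product structure of the constrained ensemble directly: the constrained path decomposes into independent sub-paths inside the $\ell+1$ sub-rectangles, \Cref{thm:clt_all} applies in each, and the $\mathbb{P}_\sigma$-independence of the pieces (for a fixed good environment $w$) lets you assemble the quenched Gaussian limits into a single one, up to an a.s.-negligible correction for the twice-counted waypoint weights. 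Your route is cleaner and more modular and avoids rederiving the hypergeometric estimate, at the cost of an extra joint-convergence argument; the paper's route stays closer to \Cref{lemma:all_paths} and extends directly to a number of waypoints growing like $N^\alpha$ (as noted in the remark following the corollary), where your decomposition into an unbounded number of factors would require more care.

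Two small points worth tightening. First, you claim the restrictions of $w$ to the different $R_r$ are $\mathbb{P}_w$-independent; adjacent sub-rectangles share the corner weight $w_{P_{r+1}}$, so this is not literally true, but you never use it---only the $\mathbb{P}_\sigma$-independence of the sub-paths, conditional on $w$, is needed for the joint limit. Second, for the waypoint correction, $\mathsf{p}>2$ already makes $\mathbb{P}_w\bigl(|w_{P_r(N)}|>\varepsilon\sqrt N\bigr)$ summable in $N$, so Borel--Cantelli does not actually need $\mathsf{p}>12$ there; that moment threshold enters only through the $\ell+1$ applications of \Cref{thm:clt_all}.
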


\Cref{thm:clt_all} and \Cref{thm:clt_conditioned}
are proven in \Cref{sub:clt_all}.

To illustrate that our approach can yield similar results for other path
families, as long as suitable path counting arguments are available, we will
also prove the analogous result for the family of paths avoiding a hole of
fixed proportion in the center of $\square_{M,N}$. For simplicity in
\Cref{thm:clt_hole} below we assume  that $M=N$ (though a suitably modified
statement can be established for $M=\lfloor \xi N \rfloor $ as well). Let
$B=\lfloor \beta N \rfloor $, where $\beta\in(0,1)$ is fixed, and define the
subset $\Sigma_{N,N}=\left\{ (i,j)\in\square_{N,N}\colon \max\left(
|i-N/2|,|j-N/2| \right)\ge B/2 \right\}$; see \Cref{fig:clt_regimes},~(b).

\begin{theorem}
	\label{thm:clt_hole}
  If $\mathsf{p}>12$, and $\sigma$ be chosen uniformly from the set of up-right
	paths that remain in $\Sigma_{N,N}$, then $\mathbb{P}_w$-almost surely,
	\begin{equation}
		\label{eq:clt_hole}
		\frac{1}{\sqrt{2N-1}}\sum_{(i,j)\in \sigma}w_{ij}
		\xrightarrow{\ \mathscr{D}\ }\mathscr{N}(0,1)
		,\qquad
		N\to\infty.
	\end{equation}
\end{theorem}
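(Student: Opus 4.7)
The proof of \Cref{thm:clt_hole} follows the same structure as that of \Cref{thm:clt_all}. Write
\[
\varphi_w(t):=\mathbb{E}_\sigma\exp\bigl(it\,\langle Y^\sigma,w\rangle/\sqrt{2N-1}\bigr)
\]
for the quenched characteristic function of the normalized path energy. By L\'{e}vy's continuity theorem (applied on a countable dense set of $t$'s) it suffices to show that for each fixed $t\in\mathbb{R}$, $\varphi_w(t)\to e^{-t^2/2}$ for $\mathbb{P}_w$-almost every $w$.

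My plan is to split this convergence into (i) an annealed statement $\mathbb{E}_w\varphi_w(t)\to e^{-t^2/2}$, and (ii) an almost-sure concentration bound $\varphi_w(t)-\mathbb{E}_w\varphi_w(t)\to 0$. For (i), every admissible path has exactly $2N-1$ vertices, so $\langle Y^\sigma,w\rangle$ is a sum of $2N-1$ independent centered unit-variance weights with uniformly bounded $\mathsf{p}$-th moment; the Berry--Esseen theorem then gives $\mathbb{E}_w\exp(it\langle Y^\sigma,w\rangle/\sqrt{2N-1})\to e^{-t^2/2}$ uniformly in $\sigma$, and averaging over $\sigma$ yields the annealed limit. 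For (ii) I would bound a high centered moment $\mathbb{E}_w|\varphi_w(t)-\mathbb{E}_w\varphi_w(t)|^{2k}$: expanding and using independence of the weights, the $\mathbb{P}_w$-expectation collapses to an integral over the joint intersection pattern of $2k$ independent uniform paths $\sigma_1,\ldots,\sigma_{2k}$ in $\Sigma_{N,N}$. Exactly as in the rectangle case, under $\mathsf{p}>12$ one can choose $k$ large enough that the resulting bound is summable in $N$, and Borel--Cantelli delivers~(ii).

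The key remaining ingredient is therefore a combinatorial overlap estimate of the form $\mathbb{E}_{\sigma,\sigma'}|\sigma\cap\sigma'|=o(N)$ (and analogous higher-order refinements) for two independent uniform paths in $\Sigma_{N,N}$. For this I would exploit the dichotomy that any up-right path from $(1,1)$ to $(N,N)$ avoiding the central square of side $B=\lfloor\beta N\rfloor$ lies entirely above the hole or entirely below it, the two classes being equinumerous by diagonal symmetry. Conditioning on the type and on the row (respectively column) at which the path first crosses the appropriate side of the hole, such a path decomposes into two independent uniform up-right paths in hole-free sub-rectangles, to which the rectangle overlap bounds from the proof of \Cref{thm:clt_all} apply. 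In the mixed case---one path above and the other below---the overlap is supported in two corner sub-rectangles, one near $(1,1)$ and one near $(N,N)$, where the same rectangle bounds apply.

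The main obstacle is the combinatorial bookkeeping: verifying that the natural conditioning really does decouple the two pieces of a path as independent uniform objects in the correct sub-rectangles, and then summing over the crossing position so that the sub-rectangle overlap bounds assemble into a moment bound strong enough to feed into the concentration framework. Once this is in place, the rest of the argument is essentially verbatim from \Cref{thm:clt_all}.
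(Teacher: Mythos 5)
Your proposal substitutes a genuinely different concentration mechanism for the paper's: you work with the quenched characteristic function $\varphi_w(t)=\mathbb{E}_\sigma\exp\bigl(it\langle Y^\sigma,w\rangle/\sqrt{2N-1}\bigr)$ and aim to control $\mathbb{E}_w\bigl|\varphi_w(t)-\mathbb{E}_w\varphi_w(t)\bigr|^{2k}$ via a replica/overlap expansion, whereas the paper tests against convex $1$-Lipschitz functions (\Cref{lemma:general_concentration_lemma}) and invokes Talagrand's inequality after truncating the weights. Your annealed step (i) is essentially the paper's Berry--Esseen bound \eqref{general_concentration_lemma_proof_next1}, and your combinatorial step (splitting paths into those going above versus below the hole, then conditioning on the crossing point of the anti-diagonal) is exactly the decomposition underlying the paper's identity \eqref{clt_hole_proof}. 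The overlap you identify as the key quantity is precisely $L(N)^2 = \sum_a\mathbb{P}_\sigma(a\in\sigma)^2 = \mathbb{E}_{\sigma,\sigma'}|\sigma\cap\sigma'|$, which the paper bounds by $O(\sqrt N)$ in \Cref{lemma:clt_hole} via one-point marginals. The one-point route avoids the bookkeeping you flag (tracking overlaps of two paths crossing the anti-diagonal at different locations $y\neq y'$), which is the main thing your second-moment conceptualization makes harder than necessary.

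Two concrete issues. First, you state the needed overlap estimate as $\mathbb{E}_{\sigma,\sigma'}|\sigma\cap\sigma'|=o(N)$, but $o(N)$ is far too weak: with $L(N)^2=o(N)$ one only gets $\lambda<\tfrac12$ in the sense of \eqref{general_QCLT_power_assumptions}, and the moment requirement for \Cref{thm:general_QCLT} would blow up. You genuinely need $L(N)^2=O(N^{1/2})$ (or thereabouts), so the sub-rectangle decomposition has to be carried out with Stirling-type precision, not just a crude bound. Second, the moment threshold $\mathsf{p}>12$ in the paper is an artifact of the truncate-then-apply-Talagrand argument (the $R^{\mathsf{p}-2}$ and $L^2R^2$ factors in \eqref{general_concentration_lemma}); there is no reason to expect the same threshold to fall out of your characteristic-function moment expansion, which is controlled by bounded quantities and low-order Taylor coefficients of $\phi(s)=\mathbb{E}_w e^{isw_a}$. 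Invoking ``exactly as in the rectangle case, under $\mathsf{p}>12$'' presupposes a proof of \Cref{thm:clt_all} by your method that you have not given, and that would likely yield a different (perhaps weaker) moment hypothesis. As written, the concentration step (ii) is a plan rather than a proof: you would need to actually expand the $2k$-th centered moment, show that only ``connected'' multi-path overlap configurations survive, bound those by powers of $L(N)^2$, and then determine which $k$ makes the bound summable in $N$. That is a substantial and unexecuted piece of the argument.
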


This theorem is proven in \Cref{sub:clt_hole}.

\begin{remark}
	\label{rmk:nonzero_mean}
	In \eqref{environment_moment_assumptions}
	we assumed that the environment random variables $w_{ij}$ have mean $0$ and variance $1$.
	By shifting and scaling 
	the energies $\langle Y^{\sigma},w \rangle$
	one readily sees that
	all our results formulated above can be extended to 
	independent
	environments with arbitrary constant mean and nonzero variance.
\end{remark}

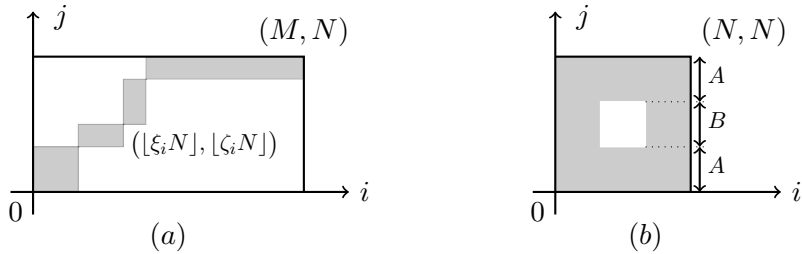
\begin{figure}[htpb]
	\centering
	\begin{tabular}{ccc}
		\begin{tikzpicture}[scale=.6]
			\draw[->, thick] (-.5,0)--++(7.5,0) node [right]
			{$i$};
			\draw[->, thick] (0,-.5)--++(0,4.5) node [right,
				xshift=4, yshift=-2] {$j$};
			\draw[thick] (6,0)--++(0,3)--++(-6,0);
			\draw[fill] (6,3) circle (0pt) node[above] {$(M,N)$};
			\draw[fill] (0,0) circle (0pt) node[below left] {$0$};
			\draw[fill] (1,1) circle (0pt);
			\draw[fill] (2,1.5) circle (0pt);
			\draw[fill] (2.5,2.5) circle (0pt);
			\draw[fill, opacity=.2]
			(0,0)--++(1,0)--++(0,1)--++(-1,0)--cycle;
			\draw[fill, opacity=.2]
			(1,1)--(2,1)--(2,1.5)--(1,1.5)--cycle;
			\draw[fill, opacity=.2]
			(2,1.5)--(2.5,1.5)--(2.5,2.5)--(2,2.5)--cycle;
			\draw[fill, opacity=.2]
			(2.5,2.5)--(2.5,3)--(6,3)--(6,2.5)--cycle;
			\node at (3,-1) {$(a)$};
			\node at (3.8,1.05) {\scalebox{.8}{$\bigl(\lfloor
						\xi_i N
						\rfloor ,\lfloor \zeta_i N
						\rfloor \bigr)$}};
		\end{tikzpicture}
		 & \hspace{40pt}
		\begin{tikzpicture}[scale=.6]
			\draw [fill,opacity=.2]
			(0,0)--++(3,0)--++(0,3)--++(-3,0)--cycle;
			\draw [fill,white]
			(1,1)--++(1,0)--++(0,1)--++(-1,0)--cycle;
			\draw[->, thick] (-.5,0)--++(5.5,0) node [right]
			{$i$};
			\draw[->, thick] (0,-.5)--++(0,4.5) node [right,
				xshift=4, yshift=-2] {$j$};
			\draw[thick] (3,0)--++(0,3)--++(-3,0);
			\draw[fill] (3,3) circle (0pt) node[above right]
				{$(N,N)$};
			\draw[fill] (0,0) circle (0pt) node[below left] {$0$};
			\draw[<->, thick] (3.2,0)--++(0,1) node[right,
				yshift=-7pt, xshift=-1pt] {\scalebox{.8}{$A$}};
			\draw[<->, thick] (3.2,1)--++(0,1) node[right,
				yshift=-7pt, xshift=-1pt] {\scalebox{.8}{$B$}};
			\draw[<->, thick] (3.2,2)--++(0,1) node[right,
				yshift=-7pt, xshift=-1pt] {\scalebox{.8}{$A$}};
			\draw[dotted] (2,1)--++(1.4,0);
			\draw[dotted] (2,2)--++(1.4,0);
			\node at (2,-1) {$(b)$};
		\end{tikzpicture}
	\end{tabular}
	\caption{Subsets of the rectangle from which the up-right path
		$\sigma$ is chosen uniformly in (a) \Cref{thm:clt_conditioned}, (b)
		\Cref{thm:clt_hole}.}
	\label{fig:clt_regimes}
\end{figure}

\subsection{Relation to other models}

We now briefly compare our setting with other models based on directed
up-right paths in a random environment. Consider first the directed polymer
models introduced in statistical physics in \cite{HuseHenleyPolymers1985}; see
also e.g.\ \cite{ImbrieSpencer1987}, \cite{Seppalainen2012}. In the lattice
setting, the directed polymer partition function is defined as (we continue to
assume that $M=N$)
\begin{equation*}
	Z_{N}(w) = \sum_{\sigma}\exp\Bigl\{ \beta\sum_{(i,j)\in \sigma}w_{ij}
	\Bigr\} = \sum_{\sigma}\exp\bigl\{\beta\,\langle Y^{\sigma},w \rangle\bigr\},
\end{equation*}
where the outer sum is taken over all up-right paths $\sigma$ inside
$\square_{N,N}$, and $\beta>0$ is the inverse temperature. The polymer weight
of a path $\sigma$ is defined as
\begin{equation*}
	Q_N(\sigma;w)= \frac1{Z_{N}(w)} \exp\bigl\{\beta\,\langle
	Y^{\sigma},w \rangle\bigr\}.
\end{equation*}
The study of the asymptotic behavior of $Z_N$ and $Q_N$ as $N\to\infty$ has
received a lot of attention in the past 30 years. Of particular interest are
the asymptotic fluctuations of the free energy $\log Z_N(w)$. These
fluctuations are expected to grow as $N^{1/3}$ under mild assumptions.
However, this scaling behavior is currently known only for a number of
integrable cases (that is for special choices of the distribution of $w_{ij}$
leading to exact formulae for the Laplace transform of $Z_N$);
see \cite{OConnellOrtmann2014}, \cite{BorodinCorwin2011Macdonald},
\cite{BorodinCorwinFerrari2012}. In integrable cases, the fluctuations
themselves are governed by one of the Tracy-Widom distributions
\cite{tracy_widom1994level_airy}, which is characteristic for the
Kardar-Parisi-Zhang universality \cite{CorwinKPZ}. Study of the asymptotic
fluctuations of $\log Z_N(w)$ when the integrability is not known presents a
major open problem in the field.

\medskip

Passing to the zero temperature limit $\beta\to\infty$ turns the free energy
$\log Z_N(w)$ into the last passage percolation time:
\begin{equation}
	\label{eq:LPP_time}
	G_N(w)=\max_{\sigma}\,\langle Y^{\sigma},w \rangle,
\end{equation}
where the maximum is taken over all up-right paths inside $\square_{N,N}$.
Assume that the environment variables $w_{ij}$ are nonnegative. This does not
significantly restrict generality since if the distribution of $w_{ij}$ is
bounded from below, one can achieve nonnegativity by adding a fixed constant
to all the $w_{ij}$. We can then interpret the nonnegative $w_{ij}$ as
random waiting times in the corner growth model so that \eqref{eq:LPP_time}
becomes the time at which the growing interface covers $(N,N)$. For further
details on corner growth we refer to
\cite{johansson2000shape}, 
\cite{seppalainen2009lecture}, 
\cite{CorwinKPZ},
\cite{baik2016-book},
\cite{seppalainen2017variational}.

Asymptotic fluctuations of $G_N(w)$ in integrable cases (when the $w_{ij}$'s
have exponential or geometric distribution) have been shown to converge on
scale $N^{1/3}$ to the Tracy-Widom distribution \cite{johansson2000shape}.
Again, the problem of asymptotic fluctuations in the corner growth model with
other distributions of $w_{ij}$ (for which exact formulae are not known to
exist) is open.

\medskip

Our results (in particular, \eqref{eq:clt_all}) mean that that the path
energies $\langle Y^\sigma,w\rangle$ asymptotically behave as
$(2N-1)\mathbb{E}w_{ij}+\zeta\sqrt {2N-1}$ (with $\zeta$
Gaussian). Let us compare this with the order of $G_N(w)$ known exactly for
special distributions of $w_{ij}$ from, e.g., \cite{johansson2000shape}. (A
similar comparison may be performed in the polymer case, but we omit it.) When
$w_{ij}$ are geometric, $\mathbb{P}(w_{ij}=k)=(1-q)q^k$, $k\in
\mathbb{Z}_{\ge0}$, we have (ignoring fluctuations) $\mathbb{E}G_N(w)\sim
N\frac{2\sqrt q}{1-\sqrt q}$, whereas typical values of  $\langle
Y^\sigma,w\rangle$ are of order $(2N-1)\mathbb{E}w_{ij}\sim N\frac{2q}{1-q}$,
which is smaller (however, the difference goes to zero as $q\searrow 0$).
Similarly, for $w_{ij}$ exponential with mean $1$, the last passage time
behaves as $\mathbb{E}G_N(w)\sim 4N$, and typical values of  $\langle
Y^\sigma,w\rangle$ are of order $(2N-1)\mathbb{E}w_{ij}\sim 2N$.

Therefore, while our results indicate that the asymptotic quenched behavior of
the typical values of the path energies $\langle Y^\sigma,w\rangle$ is
universal (i.e., does not depend on the distribution of the $w_{ij}$ under
mild assumptions), this conclusion does not extend to extreme values of
$\langle Y^\sigma,w\rangle$ responsible for the asymptotics of the last
passage time $G_N(w)$.

In \Cref{sec:concentration} we employ Talagrand's concentration inequality to
establish a quenched central limit theorem (\Cref{thm:general_QCLT}) modulo an
estimate on the distribution of the up-right path~$\sigma$. In
\Cref{sec:path_counting} we obtain the needed combinatorial estimates for
natural ensembles of up-right paths described in
\Cref{sub:gaussian_concentration_intro} above, and complete the proofs of
\Cref{thm:clt_all}, \Cref{thm:clt_hole}, and \Cref{thm:clt_conditioned}.

\medskip\noindent {\bf Acknowledgements.} 
We appreciate helpful discussions with 
Alexander Drewitz, 
Elizaveta Rebrova,
Timo
Sepp\"a\-l\"ainen, and Roman Vershynin. 
Part of this work was completed when MM and LP 
participated at a workshop 
``Analytic Tools in Probability and Applications'' 
at IMA in 2015, and we are grateful to the Institute and the workshop's 
organizers for hospitality and support.
MM is partially supported by
grant \#315593 from the Simons Foundation. LP is partially supported
by the NSF grant DMS-1664617.

\section{Gaussian concentration and quenched Central Limit Theorems}
\label{sec:concentration}

In this section we focus on general concentration estimates, and establish our
quenched CLT modulo combinatorial estimates which are postponed until
\Cref{sec:path_counting}.

\subsection{General concentration lemma}

Let us work in a more general setting. Suppose $\Sigma$ is a set with $n$
elements equipped with independent weights $\left\{ w_a\mid a\in \Sigma
	\right\}$ satisfying conditions \eqref{environment_moment_assumptions} for
  $\mathsf{p}\ge 3$. For a fixed $R>0$ we define the truncations
\begin{equation}
	\label{truncations_definition}
	w_a^{(R)}:=w_a\mathbf{1}_{|w_a|\le R}.
\end{equation}
Let $\sigma$ be a random subset of $\Sigma$ having almost surely $m$ elements.
As before, $\mathbb{P}_w$ and $\mathbb{P}_\sigma$ stand for the marginal
probability measures corresponding to $\left\{ w_a \right\}$ and $\sigma$,
respectively, and similarly for expectations. Let
$Y^{\sigma}_a:=\mathbf{1}_{a\in\sigma}$, and let $Y^\sigma$ denote the
corresponding random vector in $\mathbb{R}^n$.  Define
\begin{equation}
	\label{L_constant_abstract}
	L := \bigl\| \mathbb{E}_\sigma Y^\sigma \bigr\|_2 =
	\bigg(\sum_{a\in\Sigma}\mathbb{P}_\sigma\left( a\in\sigma
	\right)^2\biggr)^{\frac{1}{2}}.
\end{equation}

Next, let $\gamma\sim \mathscr{N}(0,1)$ denote the standard Gaussian measure
on $\mathbb{R}$, and let $\mu_{w}$ and $\mu_{w}^{(R)}$ be the quenched
distributions (conditioned on $w$) of the random variables
\begin{equation}
	\label{mu_definitions}
	\frac{\langle Y^\sigma,w \rangle }{\sqrt m}=\frac{1}{\sqrt
		m}\sum_{a\in\sigma}w_a,
	\qquad
	\frac{\langle Y^\sigma,w^{(R)} \rangle }{\sqrt m}=\frac{1}{\sqrt
		m}\sum_{a\in\sigma}w_a^{(R)},
\end{equation}
respectively. In particular, for a test function $f\colon
	\mathbb{R}\to\mathbb{R}$ we have
\begin{equation*}
	\int f\,d\mu_{w}
	=
	\mathbb{E}_{\sigma}f\left( \frac{\langle Y^\sigma,w \rangle }{\sqrt m}
	\right)
	=
	\mathbb{E}_{\sigma}f\left( \frac{1}{\sqrt m}\sum_{a\in\sigma}w_a
	\right)
	.
\end{equation*}

\begin{lemma}
	\label{lemma:general_concentration_lemma}
	Under the above assumptions, there exist absolute constants $C,c>0$ (not
  depending on parameters of the model) and a constant $\kappa>0$ depending only on $\mathsf{K}$ and
	$\mathsf{p}$ in \eqref{environment_moment_assumptions} such that for any
  $s,t,R>0$ and any convex	1-Lipschitz\footnote{Recall that a function $f\colon \mathbb{R}\to\mathbb{R}$
		is called 1-Lipschitz if $|f(x)-f(y)|\le|x-y|$ for all $x,y\in\mathbb{R}$.}
	function $f\colon \mathbb{R}\to \mathbb{R}$ we
	have
	\begin{equation}
		\label{general_concentration_lemma}
		\mathbb{P}_{w}
		\left[
			\left|
			\int f\,d\mu_{w}
			-
			\int f\,d\gamma
			\right|
			\ge
			\frac{\kappa}{\sqrt m}+
			\sqrt{\frac{\mathsf{K}nL^2}{mR^{\mathsf{p}-2}}}+s+t
			\right]
		\le
		\frac{\mathsf{K}nL^2}{mR^{\mathsf{p}-2}s^2}+C\exp\left[
			-c\,\frac{mt^2}{L^2R^2} \right].
	\end{equation}
	
\end{lemma}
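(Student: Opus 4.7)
The plan is to decompose
\begin{equation*}
\int f\,d\mu_w - \int f\,d\gamma = \mathrm{I} + \mathrm{II} + \mathrm{III}
\end{equation*}
with
\begin{equation*}
\mathrm{I} := \int f\,d\mu_w - \int f\,d\mu_w^{(R)},\quad
\mathrm{II} := \int f\,d\mu_w^{(R)} - \mathbb{E}_w\!\int f\,d\mu_w^{(R)},\quad
\mathrm{III} := \mathbb{E}_w\!\int f\,d\mu_w^{(R)} - \int f\,d\gamma,
\end{equation*}
and to control the three summands by three distinct tools: a Chebyshev estimate on the random truncation error $\mathrm{I}$, Talagrand's convex concentration inequality for the mean-zero random deviation $\mathrm{II}$, and a Berry--Esseen bound for the deterministic bias $\mathrm{III}$. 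The stated probability bound will then follow by a union bound over the two bad events coming from $\mathrm{I}$ and $\mathrm{II}$.

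For $\mathrm{I}$, the $1$-Lipschitz property of $f$ gives $|\mathrm{I}| \le A := m^{-1/2}\sum_{a\in\Sigma} p_a\, |w_a - w_a^{(R)}|$, where $p_a := \mathbb{P}_\sigma(a\in\sigma)$. The summands $v_a := |w_a - w_a^{(R)}|$ are independent and nonnegative, and satisfy $\mathbb{E}_w v_a^2 \le \mathsf{K}R^{-(\mathsf{p}-2)}$ (using $w_a^2\mathbf{1}_{|w_a|>R} \le R^{-(\mathsf{p}-2)}|w_a|^{\mathsf{p}}$). Expanding $\mathbb{E}_w A^2$ via independence, the diagonal ``variance'' contribution is at most $\mathsf{K}R^{-(\mathsf{p}-2)}\sum_a p_a^2 = \mathsf{K}L^2R^{-(\mathsf{p}-2)}$, while the mean-squared contribution $(\sum_a p_a\mathbb{E}_w v_a)^2$ is controlled by Cauchy--Schwarz against $\sum_a(\mathbb{E}_w v_a)^2 \le n\mathsf{K}R^{-(\mathsf{p}-2)}$ and $\sum_a p_a^2 = L^2$, giving at most $L^2\cdot n\mathsf{K}R^{-(\mathsf{p}-2)}$. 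Together $\mathbb{E}_w A^2 \lesssim \mathsf{K}nL^2/(mR^{\mathsf{p}-2})$, and Markov's inequality applied to $A^2$ yields $\mathbb{P}_w[|\mathrm{I}|\ge s] \le \mathsf{K}nL^2/(mR^{\mathsf{p}-2}s^2)$, matching the first tail in \eqref{general_concentration_lemma}; simultaneously Jensen gives the deterministic bound $\mathbb{E}_w A \le \sqrt{\mathsf{K}nL^2/(mR^{\mathsf{p}-2})}$, which I will reuse in the last step.

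For $\mathrm{II}$, I note that the map $w^{(R)} \mapsto \int f\,d\mu_w^{(R)} = \mathbb{E}_\sigma f(m^{-1/2}\langle Y^\sigma, w^{(R)}\rangle)$ depends only on the truncated coordinates $w_a^{(R)}\in[-R,R]$, is convex in them (as an average of compositions of the convex $f$ with linear maps), and has $\ell^2$-Lipschitz constant $L/\sqrt{m}$ (since $|\partial_{w_a^{(R)}}(\cdots)| \le m^{-1/2}p_a$ and $\|p\|_2 = L$). Talagrand's convex concentration inequality for independent bounded variables, in its around-the-mean form, then produces the subgaussian tail $\mathbb{P}_w[|\mathrm{II}|\ge t] \le C\exp(-cmt^2/(L^2R^2))$, matching the second tail in \eqref{general_concentration_lemma}.

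For the bias $\mathrm{III}$, I split $\mathrm{III} = \bigl(\mathbb{E}_w\int f\,d\mu_w^{(R)} - \mathbb{E}_w\int f\,d\mu_w\bigr) + \bigl(\mathbb{E}_w\int f\,d\mu_w - \int f\,d\gamma\bigr)$. The first summand equals $-\mathbb{E}_w\mathrm{I}$, hence is dominated by $\mathbb{E}_w A \le \sqrt{\mathsf{K}nL^2/(mR^{\mathsf{p}-2})}$ from the previous paragraph, and this is exactly the $\sqrt{\cdot}$ summand in \eqref{general_concentration_lemma}. For the second summand, conditioning on $\sigma$ turns $m^{-1/2}\langle Y^\sigma, w\rangle$ into a normalized sum of $m$ independent mean-zero variance-one variables with $\mathbb{E}|w_a|^3 \le \mathsf{K}^{3/\mathsf{p}}$ by H\"older (here I use $\mathsf{p}\ge 3$); a Berry--Esseen bound in Wasserstein-$1$ distance then gives $|\mathbb{E}_w f(Z_\sigma) - \int f\,d\gamma| \le \kappa/\sqrt{m}$ for some $\kappa = \kappa(\mathsf{K},\mathsf{p})$, uniformly in $\sigma$, which survives $\mathbb{E}_\sigma$. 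I anticipate the main technical obstacle to be the Step-2 second-moment computation, where the off-diagonal covariances must be arranged so the bound carries the full factor $nL^2$ rather than a weaker $L^2$ alone; a secondary subtlety is that Talagrand's inequality is most cleanly stated around the median, so the median-to-mean gap ($\lesssim LR/\sqrt{m}$) must be absorbed into the constants $C,c$ by the standard integration-of-tails argument.
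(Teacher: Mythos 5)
Your proof follows essentially the same route as the paper: the same truncation at level $R$, the same decomposition (your term $\mathrm{III}$ is exactly the paper's two deterministic terms combined), Talagrand's convex concentration inequality for the Lipschitz functional $F$ restricted to the truncated weights, a Chebyshev bound for the random truncation error, and a Wasserstein/Berry--Esseen bound for the residual bias. The one place you diverge is cosmetic but slightly lossy: expanding $\mathbb{E}_w A^2$ into diagonal and off-diagonal pieces yields a factor $(n+1)$ rather than the exact $n$ appearing in \eqref{general_concentration_lemma}; the paper instead applies Cauchy--Schwarz pointwise in $w$, writing $\sum_a p_a v_a \le L\bigl(\sum_a v_a^2\bigr)^{1/2}$ before taking expectation, which gives $\mathbb{E}_w A^2 \le n\mathsf{K}L^2/(mR^{\mathsf{p}-2})$ with no slack and simultaneously settles both the truncation bias bound and the tail estimate.
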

\begin{proof}
	We have
	\begin{align}
		\label{general_concentration_lemma_proof1}
		\left|\int f\,d\mu_{w}-\int f\,d\gamma\right| & \le
		\left|\int f\,d\mu_{w}-\int f\,
		d\mu_{w}^{(R)}\right|
		\\&\hspace{10pt}+
		\label{general_concentration_lemma_proof2}
		\left|\int f\, d\mu_{w}^{(R)}-\mathbb{E}_w\int f\,
		d\mu_{w}^{(R)}\right|
		\\&\hspace{10pt}+
		\label{general_concentration_lemma_proof3}
		\left|\mathbb{E}_w\int f\, d\mu_{w}^{(R)}-\mathbb{E}_w\int f\,
		d\mu_{w}\right|
		\\&\hspace{10pt}+
		\label{general_concentration_lemma_proof4}
		\left|\mathbb{E}_w\int f\, d\mu_{w}-\int f\,d\gamma\right|.
	\end{align}
	The terms \eqref{general_concentration_lemma_proof1} and
	\eqref{general_concentration_lemma_proof3} on the right will
	be estimated by elementary methods, \eqref{general_concentration_lemma_proof2}
	via Talagrand's concentration inequality for independent bounded random
	variables, and \eqref{general_concentration_lemma_proof4} via an appropriate
	version of the central limit theorem. Note that the terms
	\eqref{general_concentration_lemma_proof3} and
	\eqref{general_concentration_lemma_proof4} are deterministic.

	We start with the last term
	\eqref{general_concentration_lemma_proof4}. 
	As a consequence of results of Esseen \cite{Esseen1958}, 
	we have a bound on Wasserstein-1
	distance that is independent of the path $\sigma$; 
	see for example \cite[Proposition 2.2]{goldstein2007l1Bounds}
	for a statement of this bound that depends only on finite third
	moments (as we assume here). 
	Note also that the $L_1$-distance of distribution
	functions as used in \cite{goldstein2007l1Bounds} is equivalent to 
	the Wassertein-1 distance in our setting since we use 
	1-Lipschitz test functions.
	For all $\sigma$ (recall that $\sigma$ has $m$ elements a.s.) we have
	\begin{equation}
		\label{general_concentration_lemma_proof_next1}
		\left|
		\mathbb{E}_w f\left( \frac{1}{\sqrt m}\sum_{a\in \sigma}w_a
		\right)
		-
		\int f\, d\gamma
		\right|\le \frac{\kappa}{\sqrt m},
	\end{equation}
	where $\kappa>0$ depends only on the third absolute moment of $w_a$
	and can therefore be bounded in terms of the constants $\mathsf{p}$ and
	$\mathsf{K}$ from \eqref{environment_moment_assumptions}. Since
	\eqref{general_concentration_lemma_proof_next1} holds for all $\sigma$, an
  application of Fubini's theorem implies that
	\begin{equation}
		\label{general_concentration_lemma_proof_next2}
		\left|
		\mathbb{E}_w\int f\,d\mu_{w}
		-
		\int f\,d\gamma
		\right|
		=
		\left|
		\mathbb{E}_w\mathbb{E}_\sigma f
		\left( \frac{1}{\sqrt m}\sum_{a\in\sigma}w_a \right)
		-
		\int f\,d\gamma
		\right|
		\le
		\frac{\kappa}{\sqrt m}.
	\end{equation}
	(In particular, this implies that the expectation $\mathbb{E}_w\int f \, d\mu_w$ 
	in \eqref{general_concentration_lemma_proof4}
	is finite.)

	Let us turn to \eqref{general_concentration_lemma_proof2}. Consider
	the function $F\colon \mathbb{R}^{\Sigma}\to \mathbb{R}$ defined by
	\begin{equation*}
		F(w):=\int f\,d\mu_w=\mathbb{E}_\sigma f\left( \frac{\langle
			Y^\sigma,w \rangle }{\sqrt m} \right).
	\end{equation*}
	Since $f$ is convex, $F$ is an average of convex functions on
	$\mathbb{R}^{\Sigma}$, and so is also convex. Let us now estimate the
	Lipschitz constant of $F$. We have for $w,w'\in \mathbb{R}^{\Sigma}$:
	\begin{align}
		\nonumber
		|F(w)-F(w')|
		 & \le
		\mathbb{E}_\sigma
		\left|
		f\left( \frac{\langle Y^\sigma,w \rangle }{\sqrt m}\right)
		-
		f\left( \frac{\langle Y^\sigma,w' \rangle }{\sqrt m} \right)
		\right|
		\\&\le
		\nonumber
		\frac{1}{\sqrt m}
		\mathbb{E}_\sigma
		\left|\langle Y^\sigma,w \rangle -\langle Y^\sigma,w' \rangle
		\right|
		\\&\le
		\nonumber
		\frac{1}{\sqrt m}
		\mathbb{E}_\sigma
		\left\langle Y^\sigma,|w-w'|\right\rangle
		\\&=
		\nonumber
		\frac{1}{\sqrt m}
		\left\langle \mathbb{E}_\sigma Y^\sigma,|w-w'|\right\rangle
		\\&\le \frac{1}{\sqrt m}L \|w-w'\|_{2}.
		\label{general_concentration_lemma_proof_Lip_estimate}
	\end{align}
	Here the third line follows because $Y^\sigma$ has nonnegative
	components, $|w-w'|$ denotes componentwise absolute value, and the last line
	follows from the Cauchy--Schwarz inequality. Thus, we see that $F$ has
	Lipschitz constant at most $L/\sqrt m$.

	Using this and applying Talagrand's inequality for bounded independent
	random variables \cite{talagrand1995concentration}\footnote{See also \cite[Corollary 1.3]{ledoux1997talagrand} 
	and references and discussion therein.}
	to $F(w^{(R)})=\int
		f\,d\mu_w^{(R)}$ we obtain for any $t>0$ and for some absolute constants
    $C,c>0$,
	\begin{equation}
		\label{general_concentration_lemma_proof_next3}
		\mathbb{P}_w\left[\,
			\left|
			\int f\,d\mu_w^{(R)}-\mathbb{E}_w\int f\,d\mu_w^{(R)}
			\right|\ge t
			\right]\le
		C\exp\left[ -c\frac{mt^2}{L^2R^2} \right].
	\end{equation}

	It remains to consider terms
	\eqref{general_concentration_lemma_proof1} and
	\eqref{general_concentration_lemma_proof3}. From the Lipschitz estimate
	\eqref{general_concentration_lemma_proof_Lip_estimate} we have
	\begin{equation*}
		\left|
		\int f\,d\mu_w-\int f\,d\mu_w^{(R)}
		\right|
		=
		\bigl|
		F(w)-F(w^{(R)}) \bigr| \le \frac{L}{\sqrt m} \bigl\| w-w^{(R)}
		\bigr\|_2 = \frac{L}{\sqrt m} \left(
		\sum_{a\in\Sigma}w_a^2\mathbf{1}_{|w_a|>R} \right)^{\frac{1}{2}}.
	\end{equation*}
	Utilizing H\"older and Chebyshev inequalities, we can write
	\begin{equation*}
		\mathbb{E}_w \bigl(w_a^2\mathbf{1}_{|w_a|>R}\bigr)
		\le
		\bigl(\mathbb{E}_w
		|w_a|^{\mathsf{p}}\bigr)^{\frac{2}{\mathsf{p}}}
		\left(
		\mathbb{P}_{w}\bigl[ |w_a|>R \bigr]
		\right)
		^{1-\frac{2}{\mathsf{p}}}
		\le
		\frac{\mathbb{E}_w|w_a|^{\mathsf{p}}}
		{R^{\mathsf{p}(1-2/\mathsf{p})}}\le
		\frac{\mathsf{K}}{R^{\mathsf{p}-2}},
	\end{equation*}
	and so for any $u>0$ we have
	\begin{equation*}
		\mathbb{P}_w
		\bigg[
			\sum_{a\in\Sigma}w_a^2\mathbf{1}_{|w_a|>R}\ge u
			\biggr]
		\le
		\frac{n\mathsf{K}}{uR^{\mathsf{p}-2}}.
	\end{equation*}
	Therefore, we have
	\begin{equation}
		\label{general_concentration_lemma_proof_next4}
		\left|
		\mathbb{E}_w\int f\,d\mu_w-\mathbb{E}_w\int f\,d\mu_w^{(R)}
		\right|
		\le
		L \sqrt{\frac{\mathsf{K}n}{mR^{\mathsf{p}-2}}},
	\end{equation}
	and
	\begin{equation}
		\label{general_concentration_lemma_proof_next5}
		\mathbb{P}_w
		\left[ \,
			\left|
			\int f\,d\mu_w-\int f\,d\mu_w^{(R)}
			\right|\ge s
			\right]\le
		\frac{n\mathsf{K}L^2}{mR^{\mathsf{p}-2}s^2}.
	\end{equation}

	The lemma now follows by combining
	\eqref{general_concentration_lemma_proof_next2},
	\eqref{general_concentration_lemma_proof_next3},
	\eqref{general_concentration_lemma_proof_next4}, and
	\eqref{general_concentration_lemma_proof_next5}.
\end{proof}

\subsection{Quenched central limit theorem}

Our aim now is to apply the general \Cref{lemma:general_concentration_lemma}
to obtain the quenched central limit theorem in the corner growth setting
described in \Cref{sec:intro}. Recall that we choose $N$ (the vertical
dimension of the rectangle in \Cref{fig:coordinates_paths}) to be the main
parameter going to infinity, and we let $M=\lfloor \xi N \rfloor$ for some
$\xi>0$. The parameters in \Cref{lemma:general_concentration_lemma} are
instantiated as follows:
\begin{equation}
	\label{parameters_specialized}
	n(N)=|\Sigma_{M,N}|\le MN\sim \xi N^2, \qquad m(N)=M+N-1\sim (\xi+1)N,
	\qquad L=L(N).
\end{equation}
Here we assume that for each $N$, $\Sigma_{M,N}\subseteq\square_{M,N}$ is a
given subset and that $\sigma$ is chosen according to some distribution such
that $\sigma\in\Sigma_{M,N}$ almost surely (it replaces the set $\Sigma$ in
\Cref{lemma:general_concentration_lemma}). We further assume that as
$N\to\infty$,
\begin{equation}
	\label{general_QCLT_power_assumptions}
	\begin{split}
		n(N)&=|\Sigma_{M,N}|=O(N^{\eta}), \\ L(N)&=
		\bigg(\sum_{a\in\Sigma_{M,N}}\mathbb{P}_\sigma\left( a\in\sigma
		\right)^2\biggr)^{\frac{1}{2}} =O(N^{\lambda}),
	\end{split}
\end{equation}
for some $0<\eta\le 2$ and $0<\lambda\le \eta/2$.\footnote{The fact that we must
  have $\lambda\le \eta/2$ follows by taking the trivial estimate
	$\mathbb{P}_\sigma(a\in \sigma)\le 1$ for all $a$.}
  For the specific subsets $\Sigma_{M,N}$ considered in this paper, the parameter $L(N)$ will be
estimated separately in \Cref{sec:path_counting} below. Note that the
constants $C,c,\mathsf{K}, \mathsf{p}, \kappa$ in
\Cref{lemma:general_concentration_lemma} are independent of $N$.

\begin{theorem}
	\label{thm:general_QCLT}
	Under the above assumptions, if $\lambda<\frac{1}{2}$ and
	$\mathsf{p}>\frac{6}{1-2\lambda}$ then $\mathbb{P}_w$-almost surely,
	\begin{equation*}
		\frac{1}{\sqrt{m(N)}}\sum_{a\in \sigma}w_a\xrightarrow{\
			\mathscr{D}\ }\mathscr{N}(0,1),
		\qquad N\to\infty,
	\end{equation*}
	where the convergence in distribution to the standard normal is with respect to the
  marginal $\mathbb{P}_\sigma$.
\end{theorem}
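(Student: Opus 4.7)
The plan is to apply Lemma~\ref{lemma:general_concentration_lemma} with polynomial choices $R(N) = N^{\alpha}$, $s(N) = N^{-\beta}$, $t(N) = N^{-\tau}$, then choose positive exponents $\alpha, \beta, \tau$ so that the threshold on the left-hand side tends to zero while the right-hand side is summable in $N$, and finally invoke Borel--Cantelli to obtain $\mathbb{P}_w$-almost sure convergence $\int f\,d\mu_w \to \int f\,d\gamma$ for each $f$ in a countable separating class of convex 1-Lipschitz functions. Intersecting the resulting countably many full-measure events yields a single event $\Omega_0 \subseteq \Omega_w$ of full $\mathbb{P}_w$-measure on which the quenched convergence in distribution holds.

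For the test functions I will take $\mathcal{F} := \{f_c : c \in \mathbb{Q}\}$ with $f_c(x) := (x-c)_+$. Each $f_c$ is convex and 1-Lipschitz, and since $\mu_w$ is the finitely supported quenched law of $\frac{1}{\sqrt m} \sum_{a \in \sigma} w_a$, the integrals $\int f_c\, d\mu_w$ are trivially finite. Pointwise convergence $\int f_c\,d\mu_w \to \int f_c\,d\gamma$ along $c \in \mathbb{Q}$ extends to all $c \in \mathbb{R}$ by convexity of the map $c \mapsto \int f_c\,d\mu$ (convex functions converging pointwise on a dense set converge locally uniformly), and convergence of their right derivatives $-\mu((c,\infty))$ at each continuity point of the limit then gives $\mu_w \Rightarrow \gamma$ since $\gamma$ has a continuous distribution function.

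Substituting $n = O(N^\eta)$, $m \asymp N$, $L = O(N^\lambda)$ and the chosen scalings into Lemma~\ref{lemma:general_concentration_lemma}, the threshold becomes $O(N^{-1/2}) + O(N^{(\eta + 2\lambda - 1 - \alpha(\mathsf{p}-2))/2}) + O(N^{-\beta}) + O(N^{-\tau})$ and the tail bound becomes $O(N^{\eta + 2\lambda - 1 + 2\beta - \alpha(\mathsf{p}-2)}) + C \exp(-c'\, N^{1 - 2\lambda - 2\alpha - 2\tau})$. Summability of the first probability term requires (i)~$\alpha(\mathsf{p}-2) > \eta + 2\lambda + 2\beta$, exponential decay of the second requires (ii)~$2\alpha + 2\tau < 1 - 2\lambda$, and the threshold dictates (iii)~$\beta, \tau > 0$. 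Condition (ii) forces $\alpha < (1 - 2\lambda)/2$, explaining the hypothesis $\lambda < 1/2$; pushing $\alpha$ toward this ceiling and letting $\beta, \tau \to 0$ sharpens (i) to $\mathsf{p} > 2 + 2(\eta + 2\lambda)/(1 - 2\lambda)$, which for $\eta \le 2$ equals $6/(1 - 2\lambda)$, precisely matching the hypothesis.

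The main obstacle is this tight matching of exponents: condition (ii) forces the truncation scale $R$ close to its upper limit, making (i) the binding constraint, which yields exactly the borderline moment condition stated in the theorem. A secondary technical issue is that the lemma only controls convex 1-Lipschitz test functions, so the family $\{(x - c)_+\}$ is essential --- it is a convex, 1-Lipschitz family that nonetheless determines probability measures via the reconstruction of distribution functions from the right derivatives of $c \mapsto \int (x-c)_+\,d\mu$.
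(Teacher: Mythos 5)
Your proof is correct and follows essentially the same approach as the paper: you apply Lemma~\ref{lemma:general_concentration_lemma} with polynomial scales for $R,s,t$, carry out the same exponent bookkeeping to land on $\lambda<\tfrac12$ and $\mathsf{p}>(2+2\eta)/(1-2\lambda)\le 6/(1-2\lambda)$, and close with Borel--Cantelli over a countable family of convex $1$-Lipschitz test functions. The only variation is in the reduction from convex $1$-Lipschitz test functions to genuine weak convergence, which the paper treats in a footnote (tightness from first moments plus Weierstrass polynomial approximation), while your hockey-stick family $\{(x-c)_+\}_{c\in\mathbb{Q}}$ together with recovery of the distribution function from right derivatives of the convex map $c\mapsto\int (x-c)_+\,d\mu$ is a slightly cleaner, self-contained variant of the same step.
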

\begin{proof}
	To get the desired $\mathbb{P}_w$-almost sure convergence in
	distribution, we will choose the parameters $R,s,t$ in
	\Cref{lemma:general_concentration_lemma} depending on $N$ and apply the
	Borel-Cantelli lemma.\footnote{Convex 1-Lipschitz test functions are 
	enough to conclude convergence in distribution. 
	First, we have tightness since the first moments converge. 
	By the Weierstrass theorem, 
	compactly supported test functions can be approximated
	by polynomials (on a compact set), 
	and polynomials are linear combinations of 
	convex 1-Lipschitz functions. 
	See also, e.g.,
	\cite{guionnet2000concentration}, \cite{meckes2009some}
	for slightly different approaches.} 
	That is, from the left side of
	\eqref{general_concentration_lemma} we see that we must have
	\begin{equation*}
		\lim_{N\to\infty}
		\left[
			\frac{\kappa}{\sqrt{\xi+1}}\frac{1}{\sqrt N}+
			\sqrt{\frac{\mathsf{K}}{(\xi+1)}}
			\frac{\sqrt{n(N)}L(N)}{\sqrt N
				R(N)^{\frac{\mathsf{p}-2}{2}}}
			+s(N)+t(N)
			\right]=0,
	\end{equation*}
	which is equivalent to
	\begin{equation}
		\label{general_QCLT_proof1}
		\lim_{N\to\infty}
		\frac{n(N)L(N)^2}{N R(N)^{\mathsf{p}-2}}
		=0,\qquad
		\lim_{N\to\infty}s(N)=
		\lim_{N\to\infty}t(N)=0.
	\end{equation}
	Moreover, to use Borel-Cantelli the right side of \eqref{general_concentration_lemma}
	must be summable, which is equivalent to
	\begin{equation}
		\label{general_QCLT_proof2}
		\sum_{N= N_0}^{\infty}
		\frac{n(N)L(N)^2}{NR(N)^{\mathsf{p}-2}s(N)^2}<\infty,\qquad
		\sum_{N= N_0}^{\infty}
		\exp\left[ -c(\xi+1)\,\frac{Nt(N)^2}{L(N)^2R(N)^2} \right]<\infty,
	\end{equation}
	for some absolute constant $N_0$.

	Let $R(N)\sim N^{\rho}$ for some $\rho>0$. Then for
	\eqref{general_QCLT_proof1} and \eqref{general_QCLT_proof2} to hold under our
	assumption \eqref{general_QCLT_power_assumptions} it is
	necessary and sufficient that
	\begin{equation}
		\label{general_QCLT_proof_counting_powers}
		\eta+2\lambda-1-\rho(\mathsf{p}-2)<-1,\qquad
		1-2\lambda-2\rho>0,
	\end{equation}
	and that $s(N)$ and $t(N)$ tend to zero sufficiently slowly as negative
	powers of $N$. Setting $\rho=\frac{1}{2}-\lambda-\varepsilon$ for small enough
	$\varepsilon>0$, one can check that condition
	\eqref{general_QCLT_proof_counting_powers} (together with our assumptions $\eta\le
	2$ and $\lambda\le \eta/2$ coming from \eqref{L_constant_abstract} and
	\eqref{parameters_specialized}) is equivalent to
	\begin{equation*}
		0<\lambda<\frac{1}{2},\qquad
		\mathsf{p}>\frac{2+2\eta}{1-2\lambda}.
	\end{equation*}
	The latter inequality holds if $\mathsf{p}>\frac{6}{1-2\lambda}$,
	which completes the proof.
\end{proof}

\section{Path counting}
\label{sec:path_counting}

Our goal in this section is to obtain estimates of $L(N)$ of the form
\eqref{general_QCLT_power_assumptions} with $\lambda<\frac{1}{2}$ for the
concrete families of up-right paths described in
\Cref{sub:gaussian_concentration_intro}. These estimates lead to quenched
central limit theorems.
Similar estimates have appeared in the 
context of random polymers before,
for example, see
\cite[Appendix A]{AlbertsKhaninQuastel2012}.

\subsection{All possible paths -- proofs of \Cref{thm:clt_all} and
	\Cref{thm:clt_conditioned}}
\label{sub:clt_all}

We start with the case when $\sigma$ is chosen uniformly at random from the
set of all possible up-right paths in the rectangle $\square_{M,N}$, so
$\Sigma_{M,N}=\square_{M,N}$. Let us slice the rectangle as follows:
\begin{equation}
	\square_{M,N}
	=
	\bigsqcup_{k=2}^{M+N}\square_{M,N}^{(k)},
	\qquad
	\square_{M,N}^{(k)}
	:=
	\left\{ a=(i,j)\colon 1\le i\le M,\, 1\le j\le N,\, i+j=k \right\}.
	\label{rectangle_slicing}
\end{equation}
We can write
\begin{equation}
	\label{slicing_max_estimate}
	L(N)^2 = \sum_{k=2}^{M+N}\sum_{a\in\square_{M,N}^{(k)}}
	\mathbb{P}_\sigma\left(a\in\sigma \right)^2 \le \sum_{k=2}^{M+N}
	\max_{a\in\square_{M,N}^{(k)}}\mathbb{P}_\sigma\left( a\in\sigma \right) =
	\sum_{k=2}^{M+N} \mathscr{M}_k,
\end{equation}
where we have denoted
$\mathscr{M}_k:=\max_{a\in\square_{M,N}^{(k)}}\mathbb{P}_\sigma\left(
	a\in\sigma \right)$.
\begin{remark}
	\label{rmk:max_trivial_estimate}
	The estimate \eqref{slicing_max_estimate} holds for any distribution
	of the up-right path $\sigma$. Moreover, since the maximum probability over
	$\square_{M,N}^{(k)}$ can always be bounded by $1$, we have the trivial
	estimate $L(N)^2\le M+N-1$. In our regime ($M$ proportional to $N$) this
	estimate leads to $L(N)=O(N^{\frac{1}{2}})$ and so $\lambda=\frac{1}{2}$,
  which is not quite good enough for our purpose.
\end{remark}

We will show however that one can in fact take $\lambda=\frac{1}{4}$ using a
better estimate of $\mathscr{M}_k$. Recall that $M=\lfloor \xi N \rfloor $, $\xi>0$.

\begin{lemma}
	\label{lemma:all_paths}
	Let $\sigma$ be chosen uniformly from all possible paths in
	$\square_{M,N}$. There exists $\mathsf{C}>0$ such that for all $N$ large
	enough, $\sum_{k=2}^{M+N}\mathscr{M}_k\le \mathsf{C}\sqrt N$.
\end{lemma}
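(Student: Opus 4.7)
My plan is to compute the probability that a uniform up-right path passes through a given point $(i,j)$, bound $\mathscr{M}_k$ via Stirling's formula, and then estimate the resulting sum by comparison with an arcsine integral.

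First, by counting up-right paths through $(i,j)$ as the product of paths from $(1,1)$ to $(i,j)$ and from $(i,j)$ to $(M,N)$, I would obtain, for $i+j=k$,
\[
\mathbb{P}_\sigma\bigl((i,j)\in\sigma\bigr) = \frac{\binom{k-2}{i-1}\binom{M+N-k}{M-i}}{\binom{M+N-2}{M-1}}.
\]
As $i$ varies along the antidiagonal $\{i+j=k\}$, this is exactly the hypergeometric probability mass function governing the number of horizontal steps among the first $k-2$ steps of a uniform up-right path.

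Second, I would apply Stirling's formula to the three binomials at the maximizing index $i^\ast \approx kM/(M+N)$ to obtain
\[
\mathscr{M}_k \le \mathsf{C}_\xi\sqrt{\frac{M+N}{k(M+N-k)}}
\]
for a constant $\mathsf{C}_\xi$ depending only on $\xi$, valid whenever both $k$ and $M+N-k$ exceed some absolute threshold. For the $O(1)$ values of $k$ near the extremes $2$ and $M+N$ (where the hypergeometric becomes degenerate and the Stirling estimate is not sharp), I would use the trivial bound $\mathscr{M}_k\le 1$ noted in \Cref{rmk:max_trivial_estimate}, which contributes only $O(1)$ to the final sum.

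Third, I would pass from the sum to the arcsine integral: with the substitution $x=(M+N)\sin^2\theta$ one has $\int_0^{M+N} dx/\sqrt{x(M+N-x)} = \pi$, and since the integrand is U-shaped with integrable singularities at the endpoints, a standard comparison yields $\sum_{k=2}^{M+N-1} 1/\sqrt{k(M+N-k)} = O(1)$. Combined with the bound above, this gives $\sum_{k=2}^{M+N}\mathscr{M}_k = O(\sqrt{M+N}) = O(\sqrt N)$ since $M+N\sim(\xi+1)N$.

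The main technical step will be establishing the Stirling bound on $\mathscr{M}_k$ uniformly in $k$: one must locate the mode $i^\ast$ and bound the product of three binomial coefficients there, verifying that the variance $\sigma_k^2 \asymp k(M+N-k)MN/(M+N)^3 \asymp k(M+N-k)/(M+N)$ (using that $M/(M+N)$ is bounded away from $0$ and $1$) propagates to an upper bound on the mode. Alternatively, a local central limit theorem for the hypergeometric distribution with uniform control could be invoked. Once this estimate is in place, the integral comparison is routine.
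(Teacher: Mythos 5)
Your proposal is correct and follows essentially the same route as the paper: identify $\mathbb{P}_\sigma(a\in\sigma)$ as a hypergeometric mass, locate the mode $i^\ast\approx kM/(M+N)$, and bound $\mathscr{M}_k$ via Stirling's formula. The only difference is in how the sum over $k$ is organized: the paper splits into three ranges (very small $k$, $k$ up to $\epsilon N$, and the bulk), bounding each piece separately by $O(\sqrt{N})$, whereas you package everything into the single uniform estimate $\mathscr{M}_k \lesssim \sqrt{(M+N)/(k(M+N-k))}$ (with $O(1)$ exceptional endpoints) and compare to the arcsine integral $\int_0^{M+N} \frac{dx}{\sqrt{x(M+N-x)}}=\pi$; this is a slightly cleaner presentation of the same computation and yields the same $O(\sqrt N)$ bound.
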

\begin{proof}
	Fix $k=2,\ldots,M+N $ and $a=(i,j)\in \square_{M,N}^{(k)}$, that is, $i+j=k$. Then
	\begin{equation}
		\label{hypergeometric_distribution}
		\mathbb{P}_\sigma\left( a\in \sigma \right)
		=
		\frac{\binom{i+j-2}{i-1}\binom{M+N-i-j}{M-i}}
		{\binom{M+N-2}{M-1}}
		=
		\frac{\binom{k-2}{i-1}\binom{M+N-k}{M-i}}
		{\binom{M+N-2}{M-1}}
		,
		\qquad
		i=1,\ldots,k-1 ,
	\end{equation}
	which is the hypergeometric distribution. 
	Indeed, the numerator counts pairs of paths from $(1,1)$ to $(i,j)$ and 
	from $(i,j)$ to $(M,N)$, and the denominator counts all possible paths.
	Let us denote the 
	probability \eqref{hypergeometric_distribution}
	by $p_i^{(k)}$. By looking at ratios $p_i^{(k)}/p_{i+1}^{(k)}$ and
	comparing this to $1$ one can readily see that the mode of the distribution
	\eqref{hypergeometric_distribution} (that is the $m\in\mathbb{R}$ such that
	$p_i^{(k)}$ achieves its maximum for an integer $i$ neighboring $m$) is
	\begin{equation*}
		m=\frac{(k-1) M}{M+N}.
	\end{equation*}
	Thus, plugging $i=m$ into \eqref{hypergeometric_distribution} leads to
	an upper bound, up to a constant,  on $\mathscr{M}_k$. We now consider three ranges
	of $k$. First, if $k\le \sqrt N$ or $k\ge M+N-\sqrt N$, then the number of
	summands in \eqref{slicing_max_estimate} is of order $\sqrt N$, and we
	estimate each of them by $1$. Next, let $k$ be from $\sqrt N$ to $\epsilon N$,
	where $\epsilon>0$ is fixed (in the corresponding interval close to $M+N$ the
	estimate will be similar). Then using Stirling's formula one can readily see
	that $p_m^{(k)}=O(1/\sqrt k)$. Summing $O(1/\sqrt k)$ over $k$ from $\sqrt N$
	to $\varepsilon N$ we get a term of order $\sqrt N$ as well. Finally, if $k$ is
	from $\epsilon N$ to $(1-\epsilon)(M+N)$, then Stirling's formula gives
	\begin{equation*}
		p_m^{(k)}=\frac{O(1)}{\sqrt
			N}\frac{1}{\sqrt{\frac{k}{N}(1+\xi-\frac{k}{N})}},
	\end{equation*}
	where $O(1)$ is independent of $k$. The sum of these expressions over
	our range of $k$ is $\sqrt N$ times a Riemann sum of a convergent integral,
	and thus also has order $\sqrt N$. This completes the proof.
\end{proof}
\begin{proof}[Proof of \Cref{thm:clt_all}]
	This theorem follows from \Cref{lemma:all_paths} and
	\Cref{thm:general_QCLT} with $\lambda=\frac{1}{4}$, which leads to the moment
	condition $\mathsf{p}>\frac{6}{1-2\lambda}=12$.
\end{proof}
\begin{proof}[Proof of \Cref{thm:clt_conditioned}]
	Let us now discuss the case when $\sigma$ is chosen uniformly from all
	up-right paths within $\square_{M,N}$ that pass through the points
	$(\lfloor \xi_i  N\rfloor , \lfloor \zeta_i N \rfloor )$, $i=1,\ldots, \ell$,
	where $0<\xi_1<\ldots<\xi_\ell<\xi $ and $0<\zeta_1<\ldots<\zeta_\ell <1$ are
	fixed. In this case the sum $\sum_{k=2}^{M+N}\mathscr{M}_k$ splits into
	$\ell+1$ sums. Each of these sums has $O(N)$ terms and similarly to
	\Cref{lemma:all_paths} one can show that each sum behaves as $O(\sqrt N)$.
	Thus, \Cref{thm:clt_conditioned} holds with the same moment condition
	$\mathsf{p}>12$ as in \Cref{thm:clt_all}.
\end{proof}
\begin{remark}
	The statement of \Cref{thm:clt_conditioned} continues to be valid if
	the number of points $\ell=\ell(N)$ through which the path $\sigma$ must pass
	goes to infinity, say, as $\ell(N)=O(N^{\alpha})$, $0<\alpha<1$. Indeed,
	assume in addition that the smaller rectangles as in
	\Cref{fig:clt_regimes},~(a) have asymptotically equivalent sides, and that the
	sides of all $O(N^\alpha)$ rectangles are also asymptotically equivalent. Then
	$\sum_{k=2}^{M+N}\mathscr{M}_k$ is bounded by $\mathsf{C}N^\alpha
		N^{\frac{1-\alpha}{2}}$, and so \Cref{thm:clt_conditioned} holds with
	$\lambda=\frac{1+\alpha}{4}$, which leads to the moment condition
	$\mathsf{p}>\frac{12}{1-\alpha}$.
\end{remark}

\subsection{Paths around a hole -- proof of \Cref{thm:clt_hole}}
\label{sub:clt_hole}

Define $\Sigma_{N,N}^{(k)}:=\Sigma_{N,N}\cap\square_{N,N}^{(k)}$, where
$\Sigma_{N,N}$ is the set of vertices with the hole removed, and
$\square_{N,N}^{(k)}$ is given by \eqref{rectangle_slicing}. Our goal is to
estimate
$\mathscr{M}_k=\max_{a\in\Sigma_{N,N}^{(k)}}\mathbb{P}_{\sigma}(a\in\sigma)$
to argue similarly to \Cref{sub:clt_all}.

\begin{lemma}
	\label{lemma:clt_hole}
	In the regime $B=\lfloor \beta N \rfloor $, $\beta\in(0,1)$, there
	exists $\mathsf{C}>0$ such that for all $N$ large enough we have
	$\sum_{k=2}^{2N}\mathscr{M}_k\le\mathsf{C}\sqrt N$.
\end{lemma}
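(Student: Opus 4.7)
My plan mirrors the approach of \Cref{lemma:all_paths}, splitting the range of $k$ into the same three regimes plus an additional ``hole-corner'' regime, with the main new input being a combinatorial analysis of the normalizing constant $N_\sigma$, the number of hole-avoiding up-right paths from $(1,1)$ to $(N,N)$.

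First, using the reflective symmetry of $\Sigma_{N,N}$ under $(i,j)\mapsto(j,i)$, I would observe that every hole-avoiding path belongs to exactly one of two symmetric, equinumerous families: \emph{upper} paths, which cross from column $H:=\lfloor(N-B)/2\rfloor$ to column $H+1$ at some row $j^*\ge H'+1$ where $H':=\lfloor(N+B)/2\rfloor$, and \emph{lower} paths (the transpose condition). Parametrizing upper paths by $j^*$ gives
\[
N_\sigma=2N_u,\qquad N_u=\sum_{j^*=H'+1}^{N}\binom{H+j^*-2}{H-1}\binom{2N-H-1-j^*}{N-H-1}.
\]
A direct computation shows the ratio of consecutive summands is at most $(1-\beta)/(1+\beta)+o(1)<1$, so $N_u$ is $\Theta_\beta\bigl(\binom{N-1}{H-1}\binom{N-2}{H-1}\bigr)$, dominated by its first ($j^*=H'+1$) term.

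Next I would enumerate hole-avoiding paths through a given $a=(i,j)\in\Sigma_{N,N}^{(k)}$ by cases, based on where $a$ sits relative to the hole. When $a$ lies in the ``NW region'' ($i\le H$ and $j\ge H'+1$), every path from $(1,1)$ to $a$ stays in columns $\le H$ and every continuation stays in rows $\ge H'+1$, so the hole is never in the way and $T_a=\binom{k-2}{i-1}\binom{2N-k}{N-i}$ exactly; the SE region is symmetric. When $a$ lies in the ``SW region'' ($i,j\le H$), the continuation from $a$ must itself be hole-avoiding, and the same upper/lower split applied to the sub-path from $a$ gives a geometric-decay bound of the form
\[
T_a\le C_\beta\binom{k-2}{i-1}\Bigl[\binom{N+1-k}{H-i}+\binom{N+1-k}{H+i-k}\Bigr]\binom{N-2}{H-1};
\]
the NE region is symmetric, and the four ``edge'' regions (N, E, S, W) are handled by combining both restrictions. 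In every case, dividing by $N_\sigma=\Theta_\beta\bigl(\binom{N-1}{H-1}\binom{N-2}{H-1}\bigr)$ reduces $\mathbb{P}_\sigma(a\in\sigma)$ to a constant multiple of a \Cref{lemma:all_paths}-style hypergeometric probability~\eqref{hypergeometric_distribution} associated with paths inside a sub-rectangle of dimensions proportional to $N$, such as $[1,H]\times[1,H'+1]$.

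Applying \Cref{lemma:all_paths} to each such sub-rectangle bounds the contribution to $\sum_k\mathscr{M}_k$ from each region by $O(\sqrt{N})$, giving the claim. The main obstacle is the ``hole-corner'' regime $k\approx N+1$: here $\mathscr{M}_k$ is attained at a boundary point like $(H,H'+1)$ and can be of constant order rather than the $O\bigl(1/\sqrt{k\wedge(2N-k)}\bigr)$ valid elsewhere, so the Stirling-based sum from \Cref{lemma:all_paths} does not directly apply. One must use the geometric decay (at rate $(1-\beta)/(1+\beta)$) of the relevant binomials in $|k-N-1|$ to bound this regime's total contribution by $O(1)$, which is harmless. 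Handling all eight regions of $a$ uniformly is bookkeeping-intensive but mechanical once the geometric-decay estimate is in place.
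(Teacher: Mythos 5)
Your decomposition of the hole-avoiding paths by where they cross the vertical line $i=H$ (rather than the paper's antidiagonal $i+j=N+1$), plus the $8$-region case analysis, is a legitimately different route and the geometric-decay claim for $N_u$ and for $N_\sigma$ is correct. However, there is a genuine error in your handling of the regime $k\approx N+1$, which you yourself flag as the main obstacle.

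You claim that near $k=N+1$ the maximizer of $\mathbb{P}_\sigma(a\in\sigma)$ over $a\in\Sigma^{(k)}_{N,N}$ is the corner-gate point $(H-(N+1-k),\,H'+1)$ and that $\mathscr{M}_k$ decays \emph{geometrically} in $|k-N-1|$, so that this regime contributes $O(1)$. The geometric decay you compute is real, but it applies only to that particular boundary point; it is \emph{not} where the maximum sits once $k<N+1$. Writing $t=N+1-k$, the true near-maximizer lies in your ``W'' region, roughly at $\bigl(H-\tfrac{1-\beta}{2}\,t,\;H'-\tfrac{1+\beta}{2}\,t\bigr)$, i.e.\ hugging the side of the hole rather than its corner; all paths $(1,1)\to a\to(y,N+1-y)\to(N,N)$ with $y\le H$ through this point already avoid the hole, so its probability is genuinely of order $\Theta(1/\sqrt{t})$ (a hypergeometric mode with variance $\Theta(t)$), not geometrically small. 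Consequently $\mathscr{M}_{N+1-t}=\Theta(1/\sqrt{t})$, and the hole-corner regime contributes $\Theta(\sqrt{N})$, not $O(1)$. The lemma still holds because $\sum_{t\le N}t^{-1/2}=O(\sqrt{N})$, but your proposed fix (geometric summation) would fail and the claim as stated is wrong. The paper's single-formula approach handles this automatically: it maximizes in $i$ \emph{without} restricting to a region, producing $C(k)\sim\bigl(\tfrac{k}{N}(1-\tfrac{k}{N})\bigr)^{-1/2}$, whose blow-up at $k=N$ is integrable, so $\sum_k C(k)N^{-1/2}$ is a Riemann sum for the convergent integral $\int dx/\sqrt{x(1-x)}$ times $\sqrt{N}$.

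A secondary point: the reduction ``dividing by $N_\sigma$ gives a constant multiple of a Lemma~\ref{lemma:all_paths}-style hypergeometric for the sub-rectangle $[1,H]\times[1,H'+1]$'' is not accurate as written. For $a$ in the NW region you have $T_a=\binom{k-2}{i-1}\binom{2N-k}{N-i}$ with the second factor counting continuations all the way to $(N,N)$, whereas the sub-rectangle hypergeometric would involve $\binom{N+1-k}{H-i}$; these agree up to a constant only after further work (essentially reproving that the continuations concentrate near the gate). You will end up re-deriving something equivalent to the paper's $y$-sum; once you do, the $1/\sqrt{t}$ rather than geometric rate near $k=N$ emerges, and the case analysis collapses into a single estimate very close to the paper's.
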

\begin{proof}
	Let us denote $A:=(N-B)/2$, so $N=2A+B$, see
	\Cref{fig:clt_regimes},~(b). For simpler notation we will omit integer parts
	as this does not affect our up-to-constant estimates. In particular, we can
	and will assume that $A=\lfloor \frac{1-\beta}{2}N \rfloor $.

	By symmetry it suffices to assume that $j\ge i$ and $i+j\le N+1$; the
	estimates for other $(i,j)\in\Sigma_{N,N}$ would be the same. We have
	\begin{equation}
		\label{clt_hole_proof}
		\mathbb{P}_{\sigma}( (i,j)\in \sigma )
		=
		\frac{1}{Z_N}
		\sum_{y=1}^{A}
		\binom{k-2}{i-1}
		\binom{N+1-k}{y-i}\binom{N-1}{y-1}
		,\qquad (i,j)\in\Sigma_{N,N}^{(k)},
	\end{equation}
	where $Z_N=2\sum_{y=1}^{A}\binom{N-1}{y-1}^{2}$ is the number of
	up-right paths in $\Sigma_{N,N}$ (the factor $2$ comes from symmetry). Here
	$(y,N+1-y)$, $y=1,\ldots,A $, is the point where the up-right path intersects
	the line $i+j=N+1$. In \eqref{clt_hole_proof} we also used the convention that
	$\binom{N+1-k}{y-i}=0$ for $y<i$ since the first coordinate increases along up-right paths.

	Let us first maximize the quantity under the sum in
	\eqref{clt_hole_proof} in $i=1,\ldots,k-1 $ for fixed $y$. By considering the
	ratios of the terms with $i$ and $i+1$ we see that the mode in $i$ is at
	\begin{equation*}
		m(y)=\frac{y(k-1)}{N+1}.
	\end{equation*}
	Therefore, an up-to-constant upper bound for $\mathscr{M}_k$ following
	from \eqref{clt_hole_proof} is (for some $\mathsf{C}_1>0$)
	\begin{equation}
		\label{clt_hole_proof1}
		\mathscr{M}_k
		\le \frac{\mathsf{C}_1}{Z_N}
		\sum_{y=1}^{A}
		\binom{k-2}{m(y)-1}
		\binom{N+1-k}{y-m(y)}\binom{N-1}{y-1}.
	\end{equation}

	The sums over $y$ in both \eqref{clt_hole_proof1} and $Z_N$ are
	dominated by the behavior around $y=A$ because $\frac{A}{N}<\frac{1}{2}$.
	Indeed, this follows from standard large deviations type equivalences for the
	binomial coefficients:
	\begin{align*}
		\binom{N-1}{y-1}
		 & =
		O(N^{-\frac{1}{2}})\exp\left\{ -N
		\Bigl(\bigl(1-\frac{y}{N}\bigr) \log
		\bigl(1-\frac{y}{N} \bigr) +\frac{y}{N}  \log
		\bigl(\frac{y}{N} \bigr)\Bigr)+O(N^{-1})
		\right\}, \\
		\binom{k-2}{m(y)-1}
		 & =
		O(N^{-\frac{1}{2}})
		\exp
		\left\{
		-k
		\Bigl(
		\bigl(1-\frac{y}{N}\bigr) \log
		\bigl(1-\frac{y}{N} \bigr) +\frac{y}{N}  \log
		\bigl(\frac{y}{N} \bigr)
		\Bigr)+O(N^{-1})
		\right\}
		,         \\
		\binom{N+1-k}{y-m(y)}
		 & =
		O(N^{-\frac{1}{2}})
		\exp
		\left\{
		-(N-k)
		\Bigl(
		\bigl(1-\frac{y}{N}\bigr) \log
		\bigl(1-\frac{y}{N} \bigr) +\frac{y}{N}  \log
		\bigl(\frac{y}{N} \bigr)
		\Bigr)+O(N^{-1})
		\right\},
	\end{align*}
	and the fact that the function $\mathsf{y}\mapsto
		-(1-\mathsf{y})\log(1-\mathsf{y})-\mathsf{y}\log \mathsf{y}$ is positive and
	strictly increasing for $\mathsf{y}\in(0,1/2)$. In the above equivalences we
	assumed that $y/N$ is bounded away from $0$, and $k/N$ is bounded away from
	$0$ and $1$. The behavior at the tails can be estimated in a similar way;
  cf.\ the proof of \Cref{lemma:all_paths}.

	In the sums over $y=1,\ldots,A$, both in the numerator and the
	denominator in \eqref{clt_hole_proof1}, there are $O(\sqrt N)$ terms dominating
	the other terms. This implies that the right-hand side of
	\eqref{clt_hole_proof1} behaves as $C(k)N^{-\frac{1}{2}}$. It remains to see
	that the constant $C(k)$ coming from the numerator is summable over $k$ from
	$\epsilon N$ to $(1-\epsilon)N$. This constant can be computed using
	Stirling's approximation:
	\begin{equation*}
		C(k)= \frac{O(1)} {\sqrt{\frac{k}{N}(1-\frac{k}{N})}},
	\end{equation*}
	where $O(1)$ is independent of $k$ (but depends on $\beta$). The sum
	of these expressions over $k$ is equal to $N$ times the Riemann sum of a
	convergent integral. Therefore, the sum of the $\mathscr{M}_k$'s is bounded by
	$\sqrt N$, as desired.
\end{proof}

\Cref{thm:clt_hole} now follows from \Cref{lemma:clt_hole} and
\Cref{thm:general_QCLT}
with $\lambda=\frac{1}{4}$, so the moment condition is $\mathsf{p}>12$.

\bibliographystyle{amsalpha}
\bibliography{bib}

\end{document}